\newcommand*{\barfix}[2][.175ex]{%
  \mathpalette{\@barfix{#1}}{#2}%
}
\newcommand*{\@barfix}[3]{%
  \vbox{%
    \kern#1\relax
    \hbox{$#2#3\m@th$}%
  }%
}
\newtheorem{theorem}{Theorem}
\newtheorem{lemma}[theorem]{Lemma}
\newtheorem{proposition}[theorem]{Proposition}
\newcommand{\footremember}[2]{%
    \footnote{#2}
    \newcounter{#1}
    \setcounter{#1}{\value{footnote}}%
}
\newcommand{\footrecall}[1]{%
    \footnotemark[\value{#1}]%
} 
\title{\vspace{-2em}Hitting time of connectedness in the random hypercube process}
\author{%
Sahar Diskin \footremember{alley}{School of Mathematical Sciences, Tel Aviv University, Tel Aviv 6997801, Israel. \\Emails: sahardiskin@mail.tau.ac.il, krivelev@tauex.tau.ac.il.}%
\and Michael Krivelevich \footrecall{alley}%
}
\begin{document}
\maketitle
\vspace{-2em}
\begin{abstract}
We present a short and self-contained proof of a classical result due to Bollob\'as (1990): in the random hypercube process, with high probability the hitting time of connectedness equals the hitting time of having minimum degree at least one.
\end{abstract}

The purpose of this expository note is to present a short and self-contained proof of the classical result of Bollob\'as \cite{B90} about the hitting time of connectedness in the random hypergraph process. 

Recall that the $d$-dimensional binary hypercube $Q^d$ is the graph whose vertex set is $V=\{0,1\}^d$, and for every $\bar{x},\bar{y}\in V$, $\{\bar{x},\bar{y}\}\in E(Q^d)$ if they differ in exactly one coordinate. Thus, $Q^d$ is a $d$-regular graph on $n\coloneqq 2^d$ vertices. For $p=p(d)\in [0,1]$, the random subgraph $Q^d_p\subseteq Q^d$ is obtained by retaining each edge of $Q^d$ independently and with probability $p$. 

In 1977, Burtin \cite{B77} showed that $p=\frac{1}{2}$ is the threshold for connectedness in $Q^d_p$: for a fixed value of $p$, if $p<\frac{1}{2}$ then \textbf{whp}\footnote{With high probability, that is, with probability tending to one as $d$ tends to infinity. Throughout this note, we assume $d$ is an asymptotic parameter tending to infinity.} $Q^d_p$ is disconnected, and if $p>\frac{1}{2}$ then \textbf{whp} $Q^d_p$ is connected. Erd\H{o}s and Spencer \cite{ES79} extended this result and showed that when $p=\frac{1}{2}$, the probability that $Q^d_p$ is connected tends to $e^{-1}$. In 1990, Bollob\'as \cite{B90} proved the aforementioned hitting time result for the random hypercube process, from which one can also derive an accurate expression for the probability of the connectedness of $Q^d_p$.

Let us briefly recall that the random hypercube process starts with $Q(0)$ being the empty graph on $\{0,1\}^d$. At each step $1\le i \le \frac{nd}{2}$, $Q(i)$ is obtained from $Q(i-1)$ by adding uniformly at random a new edge from $Q^d$. Note that $Q(t)$ can be seen as choosing uniformly at random $t$ edges of $Q^d$. The hitting time of a monotone increasing, non-empty graph property $\mathcal{P}$ is a random variable equal to the index $\tau$ for which $Q(\tau)\in \mathcal{P}$, but $Q(\tau-1)\notin\mathcal{P}$.

\begin{theorem}\label{th: main}[\cite{B90}]
Consider the random hypercube process. Let $\tau_D$ be the hitting time of minimum degree (at least) one and let $\tau_C$ be the hitting time for connectedness. Then \textbf{whp} $\tau_D=\tau_C$.    
\end{theorem}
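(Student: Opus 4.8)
The plan is to reduce the whole statement to a single structural fact about the static model $Q^d_p$ near $p=\tfrac12$, and then to run the standard ``the last isolated vertex joins the giant'' argument. First I record the trivial direction: a connected graph on $n\ge 2$ vertices has minimum degree at least one, so $\tau_D\le\tau_C$ always, and everything is in showing $\tau_C\le\tau_D$ \textbf{whp}. To this end I fix a slowly growing $\omega=\omega(d)\to\infty$ and set $p_{\pm}=\tfrac12\pm\frac{\ln\omega}{2d}$, chosen so that the expected number of isolated vertices $n(1-p_{\pm})^d=(2(1-p_{\pm}))^d$ equals $\omega^{\mp 1}$. A second-moment computation then gives \textbf{whp} that $Q^d_{p_-}$ still has isolated vertices while $Q^d_{p_+}$ has none, and via the usual coupling between $Q^d_p$ and the process (conditioning on the number of edges) one gets \textbf{whp} that $\tau_D$ falls strictly between the edge-counts of $Q^d_{p_-}$ and $Q^d_{p_+}$. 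Realising the process in this window as ``take $Q^d_{p_-}$, then add the remaining edges of $Q^d_{p_+}$ in uniformly random order,'' it then suffices to control the component structure throughout this window.

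The heart of the matter is the following Main Lemma: \emph{whp $Q^d_{p_-}$ has no connected component of size $k$ with $2\le k\le n/2$.} Granting it, the reduction is clean. Since $Q^d_{p_-}$ has $\Theta(nd)$ edges it is nonempty, so it consists of one ``giant'' component (the unique component of size $>n/2$) together with isolated vertices. A further first-moment check shows \textbf{whp} that no two isolated vertices of $Q^d_{p_-}$ are $Q^d$-adjacent, the expected number of such pairs being $\tfrac{nd}{2}(1-p_-)^{2d-1}=o(1)$. Since the set of isolated vertices only shrinks as edges are added, every newly added edge incident to a currently isolated vertex $v$ must attach $v$ to a non-isolated neighbour, i.e.\ to the giant; hence at every step the graph is exactly ``giant plus isolated vertices'' and no new medium component is ever created. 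The graph therefore becomes connected precisely when the last isolated vertex acquires an edge, which is exactly $\tau_C=\tau_D$.

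It remains to prove the Main Lemma by a first-moment estimate: a component $S$ contributes $(1-p_-)^{|\partial S|}$, where $\partial S$ is its edge-boundary in $Q^d$, and I would use the edge-isoperimetric inequality of the hypercube, $|\partial S|\ge |S|(d-\log_2|S|)$. For the small range, say $2\le k\le n/(e^2 d)$, bounding the number of connected $k$-sets by $n(ed)^k$ yields a total of order $\sum_k n(edk/n)^k$, dominated by its $k=2$ term $O(d^2/n)=o(1)$; here $\log_2 k\ll d$ makes the isoperimetric bound essentially $|\partial S|\approx kd$, and the estimate is comfortable.

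The main obstacle is the complementary range $n/(e^2 d)\le k\le n/2$. Here the crude count $n(ed)^k$ of connected $k$-sets is hopelessly lossy: combined with the minimal boundary it produces an expression that blows up, reflecting the fact that the dangerous sets are the near-subcubes, which have boundary as small as $\approx n/2$ yet are very few in number. To handle this I would index not by size but by the boundary value $b=|\partial S|$ and prove a refined counting bound of the form ``the number of connected subsets of $Q^d$ of size at most $n/2$ with $|\partial S|=b$ is at most $\alpha^{\,b}$'' for some constant $\alpha<2$; this is where a near-extremal (stability) version of Harper's inequality enters, quantifying that sets of small boundary are close to subcubes and hence rare. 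Given such a bound, the large-$k$ contribution is at most $\sum_b \alpha^{b}(1-p_-)^{b}$, a convergent geometric series dominated by its smallest admissible value $b=\Theta(n\log d/d)\to\infty$, and therefore $o(1)$. Establishing this boundary-indexed counting estimate with a workable constant is the single delicate ingredient; the rest of the argument is routine.
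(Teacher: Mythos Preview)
Your reduction and the treatment of the small range are fine and essentially match the paper: the paper also checks that adjacent isolated vertices do not occur and, via the tree count and the easy isoperimetric bound, rules out components of size between $2$ and a small power of $n$ by a first-moment computation. The divergence, and the genuine gap, is in the large range.

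You try to push the first-moment argument all the way to $k=n/2$, and you correctly observe that the crude count $n(ed)^k$ fails there; you then propose to rescue it with a boundary-indexed counting bound of the form ``the number of connected $S$ with $|S|\le n/2$ and $|\partial S|=b$ is at most $\alpha^b$ for some $\alpha<2$,'' appealing to a stability version of Harper's inequality. You do not prove this, and it is not a routine statement: getting a uniform constant $\alpha<2$ over all $b$ (including the intermediate regime where sets are neither subcube-like nor generic) is exactly the delicate point, and known stability results do not hand it to you off the shelf. As written, the Main Lemma is therefore not established.

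The paper avoids this difficulty entirely by \emph{not} attempting a first-moment bound for large components. Instead it works at $p=\tfrac12-\epsilon$ for a small constant $\epsilon$, splits $p$ as $p_1+p_2$ with $p_2=\epsilon$, and uses two-round exposure. After the first round there are (whp) at most $n^{2/3}$ components of size $\ge n^{1/3}$; the second round (sprinkling with probability $\epsilon$) merges them, using only the trivial isoperimetric bound $e(A,A^c)\ge |A|$ and a union bound over the at most $n^{a/n^{1/3}}$ component-respecting bipartitions. This replaces your unproved counting estimate by a completely elementary argument. If you want to repair your proof, drop the stability route and insert this sprinkling step; working at $p_-$ so close to $\tfrac12$ is then unnecessary, and a constant gap $p=\tfrac12-\epsilon$ (with monotonicity to transfer to the process) suffices.
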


In fact, we will mainly work in $Q^d_p$. We will prove the following proposition. 
\begin{proposition}\label{prop: main}
Let $\epsilon>0$ be a sufficiently small constant, and let $p\ge \frac{1}{2}-\epsilon$. Then \textbf{whp} there is a unique connected component in $Q^d_p$ on at least $(1-o(1))n$ vertices. Furthermore, \textbf{whp} all the other components in $Q^d_p$ (if there are any) are isolated vertices, and every two isolated vertices in $Q^d_p$ are at distance at least two in $Q^d$.
\end{proposition}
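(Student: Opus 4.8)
The plan is to derive all three assertions from a single structural fact---that \textbf{whp} $Q^d_p$ has no connected component of size $k$ with $2\le k\le n/2$---together with two short first-moment computations. Write $q:=1-p\le \tfrac12+\epsilon$ and record the numeric input $nq^{d}\le 2^{d}(\tfrac12+\epsilon)^{d}=(1+2\epsilon)^{d}=n^{\log_2(1+2\epsilon)}=o(n)$, valid since $\log_2(1+2\epsilon)<1$ for $\epsilon$ small. The expected number of isolated vertices is exactly $nq^{d}=o(n)$, so by Markov's inequality \textbf{whp} there are $o(n)$ of them. For the last clause, the expected number of edges of $Q^d$ both of whose endpoints are isolated in $Q^d_p$ is at most $\tfrac{nd}{2}q^{2d-1}$ (the edge and the $2(d-1)$ further edges at its endpoints must all be absent); since $nq^{2d}=(\tfrac12+2\epsilon+2\epsilon^2)^{d}=o(1)$ for $\epsilon$ small, \textbf{whp} no such edge exists, which is exactly the distance-at-least-two statement. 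Granting the structural fact, every component is an isolated vertex or has more than $n/2$ vertices; two disjoint components of the latter kind cannot coexist, and not all vertices can be isolated, so there is a unique large component, whose size is $n$ minus the number of isolated vertices, i.e.\ $(1-o(1))n$, and all remaining components are isolated vertices.

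It remains to rule out components of size $2\le k\le n/2$. For the lower range I would use the edge-isoperimetric inequality of the hypercube, $|\partial A|\ge |A|\,(d-\log_2|A|)$ for the edge-boundary $\partial A$, together with the standard bound that $Q^d$ has at most $n(ed)^{k}$ connected $k$-vertex subsets. Since a component of size $k$ forces all of its $\ge k(d-\log_2 k)$ boundary edges to be absent,
\[
\mathbb{E}\big[\#\{\text{components of size }k\}\big]\le n(ed)^{k}\,q^{\,k(d-\log_2 k)} .
\]
Writing the right-hand side as $2^{\,d+k\,B(k)}$ with $B(k)=\log_2(ed)-(d-\log_2 k)\big(1-\log_2(1+2\epsilon)\big)$, one checks that $B(k)<0$ and $k\,|B(k)|>d$ throughout $2\le k\le n/(Cd)$ for a suitable constant $C$, so each term is at most $2^{-\Omega(d)}$ and the geometric sum over this range is $o(1)$. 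Hence \textbf{whp} there is no component of size $2\le k\le n/(Cd)$.

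The genuine obstacle is the complementary range $n/(Cd)\le k\le n/2$, where the first-moment bound collapses: the extremal sets for the isoperimetric inequality (subcubes and their perturbations) have edge-boundary only $\Theta(k)$, so although each closed set of size $\approx n/2$ is individually exponentially unlikely, there are far too many of them for the crude count $n(ed)^{k}$ to compensate $q^{|\partial A|}$. To cover this range I would instead exploit the recursive product structure $Q^d=Q^{d-1}\times K_2$: one may realise $Q^d_p$ as two independent copies $H_0,H_1$ of $Q^{d-1}_p$ joined by an independent random perfect matching of density $p$, and argue by induction on $d$ that each $H_i$ carries a unique giant component $G_i$ on $(1-o(1))2^{d-1}$ vertices, all of whose other components are isolated vertices. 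The non-giant vertices of each half span no internal edges, so any component of $Q^d_p$ meeting neither $G_0$ nor $G_1$ is a union of matching edges and hence has size at most two---already excluded above; meanwhile the $(1-o(1))2^{d-1}$ matching edges running between $G_0$ and $G_1$ are not all absent \textbf{whp}, so $G_0$ and $G_1$ fuse into the single giant of $Q^d_p$. The delicate point, which I expect to be the main difficulty, is making this induction quantitative: each level doubles the number of subcubes, so to union-bound over them the probability that a copy of $Q^m_p$ fails to contain its giant must decay fast enough in $m$ to beat the factor $2^{\,d-m}$. Establishing such a concentrated giant-component estimate---for which the constancy of $p$, comfortably above the percolation threshold $1/d$, is essential---and choosing the scale at which to halt the recursion and install a self-contained base case, is where the real work lies.
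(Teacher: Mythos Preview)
Your first-moment computations for the number of isolated vertices and for adjacent pairs of isolated vertices are correct and essentially identical to the paper's Lemma~5. Your treatment of the range $2\le k\le n/(Cd)$ via tree-counting plus edge-isoperimetry is also fine and parallels the paper's Lemma~6 (which only runs up to $n^{1/3}$, but the mechanism is the same).

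The gap is exactly where you locate it: the range $n/(Cd)\le k\le n/2$. You correctly observe that the first-moment bound collapses there, and you propose a recursive argument via $Q^d=Q^{d-1}\times K_2$, but you do not carry it out; you yourself flag the quantitative control of the failure probability at each level as ``where the real work lies.'' As written, this is a proof sketch with a missing lemma, not a proof.

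The paper sidesteps this entirely by \emph{sprinkling}. Write $p_2=\epsilon$ and choose $p_1$ so that $(1-p_1)(1-p_2)=1-p$; then $Q^d_p$ has the distribution of $Q^d_{p_1}\cup Q^d_{p_2}$ and $p_1\ge\tfrac12-2\epsilon$. In $Q^d_{p_1}$ one only rules out components of size $2$ to $n^{1/3}$ (your first-moment argument), so the non-isolated vertices form a set $W$ partitioned into at most $s\le n^{2/3}$ ``big'' components. The second round $p_2$ is used to merge these: if after sprinkling $W$ were still disconnected, there would be a component-respecting partition $W=A\sqcup B$ with $|A|=a\le|W|/2$ and no $p_2$-edge between $A$ and $B$. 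The crude isoperimetry $e(A,A^{c})\ge|A|$ gives $e(A,B)\ge a/2$ (the few edges leaving $W$ are negligible), and since $A$ is a union of big components there are at most $\sum_{i\le a/n^{1/3}}\binom{s}{i}\le n^{a/n^{1/3}}$ choices for $A$. The union bound
\[
\sum_{a=n^{1/3}}^{n/2} n^{a/n^{1/3}}(1-\epsilon)^{a/2}=o(1)
\]
then finishes. The point is that one never needs to control components of size between $n^{1/3}$ and $n/2$ directly: the small number of big components, not their individual sizes, is what makes the union bound over partitions affordable. This replaces your proposed induction with a two-line computation.
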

Note that if a graph satisfies the conclusions of \Cref{prop: main} and is disconnected, adding edges to it one by one, it will deterministically become connected the moment that its minimum degree is one. Indeed, we will show that if $p=\frac{1}{2}-\epsilon$ then \textbf{whp} $Q^d_p$ has isolated vertices, and then \Cref{th: main} will follow from \Cref{prop: main} by standard coupling.

Before proving \Cref{prop: main}, let us first collect some auxiliary (standard and/or easy) lemmas. The first one bounds from above the number of trees of a given size containing a given vertex in a graph of bounded degree.
\begin{lemma}\label{l: trees}
Let $G$ be a graph of maximum degree $d$, and let $k>0$ be an integer. For every $v\in V(G)$, let $t(v,k)$ be the number of trees in $G$ on $k$ vertices rooted at $v$. Then $t(v,k)\le (ed)^{k-1}$. 
\end{lemma}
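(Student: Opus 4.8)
The plan is to count, instead of the subtrees directly, the richer collection of pairs consisting of an abstract labelled tree on the vertex set $[k]=\{1,\dots,k\}$ together with an edge-preserving embedding of it into $G$ that sends the label $1$ to $v$; dividing out the obvious overcounting then yields the bound with the sharp base $ed$. First I would invoke Cayley's formula: the number of trees on the labelled vertex set $[k]$ is exactly $k^{k-2}$, and designating $1$ as the root does not change this count.

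Next, I fix one such labelled tree structure $\tau$ and bound the number of ways to embed it into $G$. Process the labels in an order in which every non-root label appears after its parent (for instance a breadth-first order from the root $1$), and set $\phi(1)=v$. When it is the turn of a non-root label $i$, its parent $\pi(i)$ has already been placed at some vertex $u\in V(G)$, and the edge $\{i,\pi(i)\}$ of $\tau$ forces $\phi(i)$ to be a neighbour of $u$ in $G$; since $G$ has maximum degree $d$, there are at most $d$ admissible choices. As there are $k-1$ non-root labels, each labelled structure admits at most $d^{k-1}$ such embeddings, so the total number of (structure, embedding) pairs is at most $k^{k-2}d^{k-1}$.

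Finally I would account for the overcounting. Every $k$-vertex subtree $H$ of $G$ containing $v$ is produced by exactly $(k-1)!$ of these pairs: each bijection between the non-root vertices of $H$ and $\{2,\dots,k\}$ simultaneously labels $H$ (forcing $\tau$ to be the pullback structure) and serves as an edge-preserving embedding onto $H$, and distinct subtrees give disjoint families of such pairs. Hence
\[
(k-1)!\cdot t(v,k)\le k^{k-2}d^{k-1},\qquad\text{so}\qquad t(v,k)\le \frac{k^{k-2}d^{k-1}}{(k-1)!}=\frac{k^{k-1}}{k!}\,d^{k-1}.
\]
The remaining step is purely numerical: since $k^{k}/k!\le \sum_{j\ge 0}k^{j}/j!=e^{k}$, we get $k^{k-1}/k!\le e^{k}/k\le e^{k-1}$ as soon as $k\ge 3$, while the cases $k\in\{1,2\}$ are immediate by hand. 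This gives $t(v,k)\le (ed)^{k-1}$.

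The main obstacle is extracting the optimal base $ed$ rather than a weaker constant. A direct depth-first traversal that records every edge twice only yields a bound of order $d^{2(k-1)}$, and encoding subtrees as plane trees via Catalan numbers gives $(4d)^{k-1}$; the factor $e$ appears precisely because passing through \emph{labelled} trees and dividing by the symmetry factor $(k-1)!$ converts Cayley's $k^{k-2}$ into a sharp exponential through Stirling's inequality. The only delicate point is this final numerical estimate together with the small values of $k$.
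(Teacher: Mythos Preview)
Your argument is correct and essentially identical to the paper's: both count pairs consisting of a labelled tree on $[k]$ together with an edge-preserving map into $G$ sending the root label to $v$, bound the number of such maps by $d^{k-1}$ via a BFS embedding, invoke Cayley's $k^{k-2}$, and divide by the $(k-1)!$ relabellings of the non-root vertices. The only cosmetic differences are that the paper labels the root $k$ rather than $1$ and phrases the double count from the subtree side rather than the abstract-tree side; your numerical finish $\tfrac{k^{k-1}}{k!}\le e^{k-1}$ is the same inequality needed there.
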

We present its short proof, as given in \cite[Lemma 2]{BFM98}, for completeness.
\begin{proof}
Let $\mathcal{T}(v,k)$ be the family of trees in $G$ on $k$ vertices rooted at $v$. Then $t(v,k)=|\mathcal{T}(v,k)|$. Given a tree $T\in \mathcal{T}(v,k)$, label $v$ with $k$, and choose a labelling $f: V(T)\setminus\{v\}\rightarrow [k-1]$ for the remaining vertices of $T$. Note that every $T\in \mathcal{T}(v,k)$ is in $(k-1)!$ many such pairs $(T,f)$. Furthermore, each pair $(T,f)$ defines a unique spanning tree $T'$ of $K_k$, in which $(i,j)$ is an edge of $T'$ if and only if the vertices $x,y\in V(T)$ with $f(x)=i$, $f(y)=j$ are connected by an edge of $T$. Thus, all that remains is to estimate in how many ways a spanning tree $T'$ of $K_k$ can be obtained under such labellings. 

Fix (say) a BFS order on $T'$ starting from $k$, and upon reaching the vertex $\ell\in [k-1]$ for the first time, define $f^{-1}(\ell)$. As $f^{-1}(\ell)$ must be a neighbour in $G$ of the preimage of the (already embedded) father of $\ell$, there are at most $d$ ways to define $f^{-1}(\ell)$. Hence, using Cayley's formula to count the number of spanning trees $T'$ of $K_k$, we obtain that
\begin{align*}
\text{\# of pairs } (T,f)= t(v,k)\cdot (k-1)! \le d^{k-1}k^{k-2},    
\end{align*}
implying $t(v,k)\ge \frac{k^{k-2}}{(k-1)!}d^{k-1}$. Since $\frac{k^{k-2}}{(k-1)!}<e$ for every integer $k>0$, the lemma follows.    
\end{proof}

We will also need some control of the edge-expansion of sets in $Q^d$. To that end, the following two approximate versions of Harper's edge isoperimetric inequality \cite{H64} will suffice. As we will shortly see, their proof is fairly simple. Given $A,B\subseteq V(Q^d)$ with $A\cap B=\varnothing$, we denote by $e(A,B)$ the number of edges with one endpoint in $A$, and their other endpoint in $B$, and by $e(A)$ the number of edges spanned by $A$.
\begin{lemma}\label{l: harper for small sets}
For every $S\subseteq V(Q^d)$ we have that $e(S,S^C)\ge |S|(d-2\log_2|S|)$.
\end{lemma}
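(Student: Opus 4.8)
The plan is to pass from the edge boundary to the number of internal edges. Since $Q^d$ is $d$-regular, summing degrees over $S$ counts each edge spanned by $S$ twice and each boundary edge once, so
\[
e(S,S^C) = d|S| - 2e(S).
\]
Hence it suffices to prove the clean bound $e(S)\le |S|\log_2|S|$, since then $e(S,S^C)\ge d|S|-2|S|\log_2|S| = |S|(d-2\log_2|S|)$, exactly as required. (The name ``for small sets'' reflects that the bound is only meaningful when $\log_2|S|<d/2$.)

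I would prove $e(S)\le|S|\log_2|S|$ by induction on the dimension $d$. For the inductive step, split $V(Q^d)$ along the last coordinate into two facets $Q_0,Q_1$, each a copy of $Q^{d-1}$, and write $S_0=S\cap Q_0$, $S_1=S\cap Q_1$, with $m_j=|S_j|$ and (without loss of generality) $m_1\le m_0$. Every edge spanned by $S$ is either internal to one facet or is one of the crossing edges in the last direction; each vertex has a single neighbour in that direction, so the crossing edges form a matching between $S_0$ and $S_1$, and there are at most $m_1$ of them. This gives
\[
e(S)\le e(S_0)+e(S_1)+m_1,
\]
and applying the induction hypothesis inside each $Q^{d-1}$ bounds $e(S_j)$ by $m_j\log_2 m_j$.

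It then remains to verify the elementary inequality
\[
m_0\log_2 m_0 + m_1\log_2 m_1 + m_1 \le (m_0+m_1)\log_2(m_0+m_1).
\]
This is where the factor of $2$ in the statement earns its keep: writing the right-hand side as $m_0\log_2(m_0+m_1)+m_1\log_2(m_0+m_1)$, the inequality reduces to $m_1\log_2\frac{m_0+m_1}{m_1}\ge m_1$ together with $m_0\log_2\frac{m_0+m_1}{m_0}\ge 0$, and the first of these holds simply because $\frac{m_0+m_1}{m_1}\ge 2$ when $m_1\le m_0$. No convexity estimate or calculus is needed; the degenerate case $m_1=0$ (where $e(S)=e(S_0)$ directly) and the base case $d\le 1$ are immediate, using the convention $0\log_2 0=0$.

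The only genuinely substantive point is the splitting inequality $e(S)\le e(S_0)+e(S_1)+\min(m_0,m_1)$, i.e.\ the observation that the crossing edges form a partial matching; once this is in place the induction closes with the trivial logarithmic inequality above, and the regularity identity converts the internal-edge bound into the claimed edge-isoperimetric estimate.
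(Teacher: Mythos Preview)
Your argument is correct, but it follows a genuinely different route from the paper. Both proofs begin with the same reduction: use $d$-regularity to write $e(S,S^C)=d|S|-2e(S)$, so that the target becomes $e(S)\le |S|\log_2|S|$. From there the two arguments diverge. You induct on the dimension, splitting $S$ along a coordinate into $S_0,S_1$ with $m_1\le m_0$, bounding the crossing edges by the matching size $m_1$, and closing the induction with the elementary inequality $m_0\log_2 m_0+m_1\log_2 m_1+m_1\le (m_0+m_1)\log_2(m_0+m_1)$, which holds because $(m_0+m_1)/m_1\ge 2$. The paper instead proves a structural lemma: if $Q^d[S]$ has minimum degree $\delta$ then $|S|\ge 2^{\delta}$, established by a BFS-layer count showing $|S_i|\ge \binom{\delta}{i}$; combined with the standard fact that every graph of average degree $\bar d$ contains a subgraph of minimum degree at least $\bar d/2$, this yields $|S|\ge 2^{e(S)/|S|}$. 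Your induction is shorter and entirely self-contained, requiring no auxiliary degeneracy argument; the paper's approach, on the other hand, isolates the reusable statement ``minimum degree $\delta$ in $Q^d$ forces $|S|\ge 2^\delta$'', which has independent interest even though it is not used elsewhere in this note.
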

\begin{proof}
It suffices to show that for every $S\subseteq V(Q^d)$, if the minimum degree of $Q^d[S]$ is $\delta$, then $|S|\ge 2^{\delta}$. Indeed, since every graph of average degree $\bar{d}$ contains a subgraph of minimum degree at least $\bar{d}/2$, we then have that $|S|\ge 2^{\frac{e(S)}{|S|}}$, that is, $e(S)\le |S|\log_2|S|$. Since $Q^d$ is $d$-regular, we obtain $e(S,S^C)=d|S|-2e(S)\ge |S|(d-2\log_2|S|)$, as required.

To the task at hand, suppose the minimum degree of $Q^d[S]$ is $\delta$. Fix an arbitrary $v\in S$, and for $0\le i \le d$, let $S_i\subseteq S$ be the set of vertices whose distance (in $Q^d$) from $v$ is exactly $i$. Since $Q^d$ is bipartite, it does not contain cycles of odd length, and thus for every $i\in [0,d]$ we have that $e(S_i)=0$. Furthermore, all edges from $u\in S_i$ to $S$ have their other endpoints in $S_{i-1}\cup S_{i+1}$. Moreover, observe that $u\in S_i$ sends at most $i$ edges to $S_{i-1}$. Since the minimum degree of $Q^d[S]$ is $\delta$, it follows that for every $i\in [0, d-1]$, $e(S_i,S_{i+1})\ge (\delta-i)|S_i|$. As every $w\in S_{i+1}$ sends at most $i+1$ edges to $S_i$, we conclude that $|S_{i+1}|\ge \frac{\delta-i}{i+1}|S_i|$. Thus, by induction, it follows that $|S_i|\ge\binom{\delta}{i}$ for every $i\in [0,\delta]$. Altogether, we obtain $|S|\ge \sum_{i=0}^{\delta}|S_i|\ge \sum_{i=0}^{\delta}\binom{\delta}{i}=2^{\delta}$.
\end{proof}

The above bound will be too weak (or even vacuous) for large enough sets in $Q^d$. For such sets, the following `folklore' bound will suffice for us. Given a subgraph $H\subseteq Q^d$ and subsets $A,B\subseteq V(H)$ with $A\cap B=\varnothing$, we denote by $e_H(A, B)$ the number of edges in $H$ with one endpoint in $A$ and the other endpoint in $B$.
\begin{lemma}\label{l: harper for big sets}
For every $S\subseteq V(Q^d)$ with $|S|\le 2^{d-1}$ we have that $e(S,S^C)\ge |S|$.
\end{lemma}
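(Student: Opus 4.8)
The plan is to induct on the dimension $d$, splitting the cube along a single coordinate into two copies of $Q^{d-1}$ joined by a perfect matching. The base case $d=1$ is immediate: here $Q^1 = K_2$ and $|S| \le 1$, so either $S = \varnothing$, or $|S| = 1$ and the unique edge of $Q^1$ crosses the cut, giving $e(S, S^C) = 1 = |S|$.

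For the inductive step, I would fix the last coordinate to partition $V(Q^d)$ into two copies of $Q^{d-1}$, say $Q_0$ and $Q_1$ (the vertices ending in $0$ and in $1$ respectively), together with the perfect matching $M$ that joins each $x0$ to $x1$. Writing $S_0 = S \cap V(Q_0)$ and $S_1 = S \cap V(Q_1)$, and identifying both with subsets of $\{0,1\}^{d-1}$ via their first $d-1$ coordinates, the cut $e(S,S^C)$ decomposes as the boundary of $S_0$ inside $Q_0$, plus the boundary of $S_1$ inside $Q_1$, plus the number of matching edges with exactly one endpoint in $S$, which is precisely $|S_0 \triangle S_1| \ge \bigl||S_0| - |S_1|\bigr|$. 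Setting $a = |S_0|$ and $b = |S_1|$ with, without loss of generality, $a \ge b$, the argument then splits into two cases according to whether the larger part exceeds $2^{d-2}$.

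In the first case, when $a \le 2^{d-2}$ (so both $a,b \le 2^{d-2}$), the induction hypothesis applies directly to $S_0$ in $Q_0$ and to $S_1$ in $Q_1$, yielding a boundary of at least $a + b = |S|$ even before counting the matching edges. In the second case, when $a > 2^{d-2}$, the bound $a + b = |S| \le 2^{d-1}$ forces both $b < 2^{d-2}$ and $|V(Q_0) \setminus S_0| = 2^{d-1} - a < 2^{d-2}$. Here I would apply the induction hypothesis to $S_1$ (giving boundary $\ge b$) and to the \emph{complement} $V(Q_0) \setminus S_0$ inside $Q_0$, whose boundary equals that of $S_0$ and is therefore $\ge 2^{d-1} - a$; combining these with the matching contribution $a - b$ gives $e(S, S^C) \ge (2^{d-1} - a) + b + (a - b) = 2^{d-1} \ge |S|$.

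I expect the only real subtlety to be this second case, where one of the two halves is too large for a direct appeal to the induction hypothesis: the key trick is to pass to the complement within that half (which has the same boundary and is now small enough) and to use the matching edges to make up the difference. Everything else reduces to the clean decomposition of the cut across the coordinate split and a bit of bookkeeping of the three contributions.
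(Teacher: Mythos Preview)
Your proof is correct and is essentially the paper's own argument: induction on $d$, split along a coordinate into two copies of $Q^{d-1}$ plus a perfect matching, apply the induction hypothesis to each half (passing to the complement in the larger half when it exceeds $2^{d-2}$), and add the matching contribution $\bigl||S_0|-|S_1|\bigr|$. The paper merely packages your case split more compactly by writing the inductive bound on the larger half as $\min\{|S_1|,\,2^{d-1}-|S_1|\}\ge |S_0|$, which simultaneously covers both of your cases.
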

\begin{proof}
We prove by induction on $d$. For $d=1$, the claim follows immediately. Suppose $d>1$. Let $Q_0$ be the subcube of $Q^d$ obtained by fixing the first coordinate to be zero, and let $Q_1$ be the subcube of $Q^d$ obtained by fixing the first coordinate to be one. Note that both $Q_0,Q_1$ are isomorphic to $Q^{d-1}$. Let $S_0\coloneqq S\cap V(Q_0), S_1\coloneqq S\cap V(Q_1)$, and note that $S=S_0\sqcup S_1$. Suppose without loss of generality that $|S_1|\ge |S_0|$. Then, by the induction hypothesis, since $|S_0|\le \frac{1}{2}|S|\le 2^{d-2}$, we have $e_{Q_0}(S_0, V(Q_0)\setminus S_0)\ge |S_0|$. Furthermore, by induction again, $e_{Q_1}(S_1, V(Q_1)\setminus S_1)\ge \min\left\{|S_1|, |V(Q_0)|-|S_1|\right\}\ge |S_0|$. Finally, each vertex of $S_1$ is incident to exactly one edge to $Q_0$, and these edges are disjoint, implying $e(S_1, V(Q_0)\setminus S)\ge |S_1|-|S_0|$. Altogether, we obtain that
\begin{align*}
    e(S,S^C)&\ge e_{Q_0}(S_0, V(Q_0)\setminus S_0)+e_{Q_1}(S_1, V(Q_1)\setminus S_1)+e(S_1, V(Q_0)\setminus S)\\&\ge |S_0|+|S_0|+|S_1|-|S_0|=|S|.
\end{align*}
\end{proof}

Turning to the proof of \Cref{prop: main}, let us first show that \textbf{whp} every two isolated vertices in $Q^d_p$ are at distance (in $Q^d$) at least two.
\begin{lemma}\label{l: isolated vertices are far}
\textbf{Whp}, every two isolated vertices in $Q^d_p$ are at distance at least two.
\end{lemma}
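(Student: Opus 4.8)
The plan is to apply the first moment method. Two distinct vertices are at distance at least two in $Q^d$ precisely when they are non-adjacent, so it suffices to show that \textbf{whp} no edge of $Q^d$ has both of its endpoints isolated in $Q^d_p$. To this end, let $X$ denote the number of edges $\{u,v\}\in E(Q^d)$ such that both $u$ and $v$ are isolated in $Q^d_p$, and I would show that $\mathbb{E}[X]=o(1)$; Markov's inequality then gives $X=0$ \textbf{whp}, which is exactly the assertion of the lemma.

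First I would compute, for a fixed edge $\{u,v\}\in E(Q^d)$, the probability that both endpoints are isolated. Isolating $u$ forces the $d$ edges of $Q^d$ incident to $u$ to be absent from $Q^d_p$, and likewise for $v$; since $u$ and $v$ share exactly the edge $\{u,v\}$, this amounts to the absence of a set of $2d-1$ distinct edges. As edges are retained independently, this event has probability $(1-p)^{2d-1}$.

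Summing over the $\frac{nd}{2}=2^{d-1}d$ edges of $Q^d$ and using $n=2^d$, linearity of expectation gives
\begin{align*}
\mathbb{E}[X]=2^{d-1}d\,(1-p)^{2d-1}\le 2^{d-1}d\left(\tfrac12+\epsilon\right)^{2d-1},
\end{align*}
where I used $1-p\le \tfrac12+\epsilon$. Since $2^{d}\left(\tfrac12+\epsilon\right)^{2d}=\left(\tfrac12+2\epsilon+2\epsilon^2\right)^{d}$, and the base $\tfrac12+2\epsilon+2\epsilon^2$ is strictly less than $1$ for $\epsilon$ a sufficiently small constant, the exponential decay beats the polynomial factor $d$, and hence $\mathbb{E}[X]\to 0$ as $d\to\infty$.

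The argument is essentially a routine first moment computation, so I do not anticipate a genuine obstacle. The only point requiring care is the bookkeeping of the exponent: one must count exactly $2d-1$ (rather than $2d$) edges whose absence is required, and check that the factor $(1-p)^{2d}$, contributing a base $\left(\tfrac12+\epsilon\right)^2\approx \tfrac14$ per dimension, is strong enough to overcome the $2^d$ coming from the number of edges once $\epsilon$ is small. This is precisely where the hypothesis that $\epsilon$ is a sufficiently small constant is used.
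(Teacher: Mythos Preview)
Your proposal is correct and follows essentially the same argument as the paper: a first moment (union bound) over edges of $Q^d$, using that both endpoints being isolated forces $2d-1$ edges to be absent, with probability $(1-p)^{2d-1}$. The only cosmetic difference is in the final estimate---the paper bounds $(1-p)^{2d-1}\le n^{-3/2}$ directly, while you rewrite $2^d(1/2+\epsilon)^{2d}$ as $(1/2+2\epsilon+2\epsilon^2)^d$; both conclusions are equivalent.
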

\begin{proof}
Fix an edge $\{u,v\}\in E(Q^d)$. The probability that $u$ and $v$ are isolated vertices in $Q^d_p$ is $(1-p)^{2d-1}=\frac{1}{2^{2d-1}}\left(1-2\epsilon\right)^{2d-1}\le n^{-3/2}$. There are $\frac{nd}{2}$ edges to consider. Thus, by the union bound, the probability that there are two isolated vertices in $Q^d_p$ at distance one in $Q^d$ is at most $\frac{nd}{2}n^{-3/2}=o(1)$.
\end{proof}

Let us also show that \textbf{whp} there are no components in $Q^d_p$ whose order is between $2$ and $n^{1/3}$.
\begin{lemma}\label{l: edges are far apart}
Let $\epsilon>0$ be a sufficiently small constant. Let $p\ge \frac{1}{2}-\epsilon$. Then, \textbf{whp} there are no components in $Q^d_p$ whose order is between $2$ and $n^{1/3}$.
\end{lemma}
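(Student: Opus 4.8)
The plan is to show that a small connected component (order between $2$ and $n^{1/3}$) is extremely unlikely by a first-moment (union bound) argument over all possible such components. A connected component on $k$ vertices must contain a spanning tree on those $k$ vertices, and moreover must be isolated from the rest of the graph, meaning all edges in the edge-boundary (in $Q^d$) of the vertex set are absent. So the strategy is: for each $k$ in the range $2 \le k \le n^{1/3}$, bound the expected number of components of order exactly $k$, and show the sum over this range is $o(1)$.

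First I would fix a target size $k$ and a root vertex $v$. By Lemma~\ref{l: trees}, the number of trees on $k$ vertices in $Q^d$ rooted at $v$ is at most $(ed)^{k-1}$, and summing over the $n$ choices of $v$ (with the usual overcounting by a factor of $k$, which only helps) gives at most $n(ed)^{k-1}$ vertex sets $S$ that could host a connected component, each equipped with a spanning tree. For a fixed such $S$ with spanning tree $T$, the probability that $S$ forms a connected component is at most the probability that the $k-1$ tree edges are all present, which is $p^{k-1}$, times the probability that every edge in the edge-boundary $e(S, S^C)$ is absent, which is $(1-p)^{e(S,S^C)}$. Since $k \le n^{1/3}$ we have $2\log_2 k \le \frac{2}{3}d$, so Lemma~\ref{l: harper for small sets} gives $e(S, S^C) \ge k(d - 2\log_2 k) \ge \frac{d}{3}\cdot k$ (taking $k$ large; small $k$ I would check directly). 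Hence each term is at most $p^{k-1}(1-p)^{dk/3}$.

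Putting these together, the expected number of components of order $k$ is at most
\begin{align*}
n(ed)^{k-1}\, p^{k-1}\,(1-p)^{dk/3} \le n \left(ed \cdot (1-p)^{d/3}\right)^{k-1} (1-p)^{d/3}.
\end{align*}
Since $p \ge \frac12 - \epsilon$ we have $1-p \le \frac12 + \epsilon$, so $(1-p)^{d/3} \le \left(\tfrac12+\epsilon\right)^{d/3} = 2^{-d/3}(1+2\epsilon)^{d/3}$; for $\epsilon$ small enough this is at most $2^{-d/4}$, which decays exponentially in $d$ and thus beats the polynomial factor $ed$. Therefore the base $ed\,(1-p)^{d/3}$ is $o(1)$, and the whole expression is bounded by $n \cdot (1-p)^{d/3} \cdot o(1)^{k-1}$. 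Summing the geometric-type series over $2 \le k \le n^{1/3}$, the dominant term is $k=2$, giving a bound of order $n \cdot (ed)\,(1-p)^{2d/3}$; since $(1-p)^{2d/3} \le 2^{-d/2}(1+2\epsilon)^{2d/3} = n^{-1/2}(1+2\epsilon)^{2d/3}$, this is at most $n^{1/2}\cdot d \cdot (1+2\epsilon)^{2d/3} = o(1)$ for $\epsilon$ sufficiently small. Hence \textbf{whp} no such component exists.

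The main obstacle I anticipate is getting the constants to line up so that the isoperimetric gain from Lemma~\ref{l: harper for small sets} strictly dominates both the tree-counting entropy factor $(ed)^{k-1}$ and the initial $n = 2^d$ from the choice of root. The crux is that the edge-boundary grows like $dk$ while the number of candidate trees grows only like $(ed)^k$; since $(1-p)^{d}$ is roughly $2^{-d} = 1/n$ and dominates any $\mathrm{poly}(d)^k$ factor once $\epsilon$ is small, the argument closes, but one must be careful that the constraint $k \le n^{1/3}$ is exactly what keeps $2\log_2 k$ bounded away from $d$ (so that the boundary bound is a genuine $\Omega(dk)$ rather than vacuous), and that the $k=2$ boundary term is handled by the extra global factor of $n^{-1/2}$ rather than by exponential decay in $k$.
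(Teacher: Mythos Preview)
Your approach is exactly the paper's: a union bound over spanning trees of potential components (counted via Lemma~\ref{l: trees}), combined with the isoperimetric bound of Lemma~\ref{l: harper for small sets} to control the probability that all boundary edges are absent. However, your execution contains a genuine gap at the very end.

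The problem is that you replace the boundary bound $k(d-2\log_2 k)$ by the uniform estimate $dk/3$ for all $k\le n^{1/3}$, and then assert that the resulting $k=2$ term,
\[
n\cdot ed\cdot (1-p)^{2d/3}\le n\cdot ed\cdot 2^{-2d/3}(1+2\epsilon)^{2d/3}=ed\cdot n^{1/3}(1+2\epsilon)^{2d/3},
\]
is $o(1)$. (Your text has an arithmetic slip, writing $2^{-d/2}$ and hence $n^{1/2}$, but either way the conclusion is the same.) This quantity is \emph{not} $o(1)$: it grows like $n^{1/3}$ regardless of how small $\epsilon$ is. The crude bound $d-2\log_2 k\ge d/3$ throws away a factor of roughly $(1-p)^{2d/3}\approx n^{-2/3}$ in the exponent when $k$ is a constant, and that loss is fatal precisely at the dominant term $k=2$. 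Your parenthetical remark that ``small $k$ I would check directly'' is exactly the missing ingredient, but you do not carry it out, and your final displayed estimate contradicts it.

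The paper's proof avoids this by keeping the exact exponent $k(d-2\log_2 k)$ and splitting the sum at $k=d$: for $k\le d$ one has $d-2\log_2 k\ge d-2\log_2 d$, so the $k$th term is at most $n\bigl[ed(\tfrac12+\epsilon)^{d-2\log_2 d}\bigr]^k$, which already at $k=2$ is $O(d\cdot n^{-1+O(\epsilon)})$; for $d<k\le n^{1/3}$ the bound $d-2\log_2 k\ge d/3$ is fine because now $k$ is large enough for the geometric decay in $k$ to absorb the leading factor of~$n$. The fix to your argument is therefore simply to retain the full boundary count $k(d-2\log_2 k)$ for small $k$ rather than collapsing it to $dk/3$ uniformly.
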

\begin{proof}
Let $k\in [2, n^{1/3}]$. Let $\mathcal{A}_k$ be the event that there exists a component of order $k$ in $Q^d_p$. For $\mathcal{A}_k$ to occur, there should be a tree $T$ of order $k$ in $Q^d$ with all the edges of $Q^d$ between $V(T)$ and $V\setminus V(T)$ closed in $Q^d$. By Lemma \ref{l: harper for small sets}, there are at least $k(d-2\log_2k)$ such edges in $Q^d$. Thus, by Lemma \ref{l: trees} and by the union bound, we have that
\begin{align*}
    \mathbb{P}\left[\bigcup_{2\le k \le n^{1/3}}\mathcal{A}_k\right]&\le \sum_{k=2}^{n^{1/3}}n(ed)^{k-1}(1-p)^{k(d-2\log_2k)}\\
    &\le\sum_{k=2}^{d}n\left[ed\left(\frac{1}{2}+\epsilon\right)^{d-2\log_2k}\right]^k+\sum_{k=d+1}^{n^{1/3}}n\left[ed\left(\frac{1}{2}+\epsilon\right)^{\frac{d}{3}}\right]^k\\
    &\le d\cdot n^{-1/3}+n^{1/3} \cdot n^{-d/4}=o(1).
\end{align*}
\end{proof}

We are now ready to prove \Cref{prop: main}.
\begin{proof}[Proof of \Cref{prop: main}]
Let $p_2=\epsilon$, and let $p_1$ be such that $(1-p_1)(1-p_2)=1-p=\frac{1}{2}+\epsilon$. Note that $Q^d_{p_1}\cup Q^d_{p_2}$ has the same distribution as $Q^d_p$, and that $p_1\ge \frac{1}{2}-2\epsilon$.

The expected number of isolated vertices in $Q^d_{p_1}$ is $n(1-p_1)^d\le n^{9\epsilon}$. Thus, by Markov's inequality, \textbf{whp} there are at most $n^{10\epsilon}$ isolated vertices in $Q^d_{p_1}$. By Lemma \ref{l: isolated vertices are far}, in $Q^d_{p_1}$ \textbf{whp} there are no components of order between $2$ and $n^{1/3}$. Let $W$ be the set of vertices in components of size at least $n^{1/3}$ in $Q^d_{p_1}$, where their number $s$ is at most $n^{2/3}$. Let us show that after sprinkling with probability $p_2$, \textbf{whp} all the components in $W$ merge. Indeed, otherwise, there would have been a component respecting partition $A\sqcup B = W$, with no edges between $A$ and $B$ in $Q^d_{p_2}$. Suppose without loss of generality that $a=|A|\le |B|$. Then, by \Cref{l: harper for big sets}, $e(A, A^C)\ge |A|$. As there are at most $dn^{10\epsilon}$ edges in $Q^d$ touching $V\setminus W$, we derive $e(A,B)\ge |A|/2$. There are at most $\sum_{i=1}^{a/n^{1/3}}\binom{s}{i}\le s^{a/n^{1/3}}\le n^{a/n^{1/3}}$ ways to partition $W$ into two parts one of which is of order $a$. Thus, by the union bound the probability of this event is at most
\begin{align*}
    \sum_{a=n^{1/3}}^{n/2}n^{a/n^{1/3}}(1-\epsilon)^{a/2}\le \sum_{a=n^{1/3}}^{n/2}n^{a/n^{1/3}}e^{-\frac{\epsilon a}{2}}=o(1).
\end{align*}
Thus, \textbf{whp} all the components in $W$ merge after sprinkling with probability $p_2$.

Note that by Lemma \ref{l: isolated vertices are far}, \textbf{whp} every two isolated vertices in $Q^d_{p_1}$ (and in $Q^d_p$) are not connected by an edge of $Q^d$. Hence, adding any edge touching an isolated vertex connects it to a component whose order is at least $n^{1/3}$ in $Q^d_{p_1}$, and these components all merge \textbf{whp}. Therefore, \textbf{whp}, there exists a unique connected component in $Q^d_{p_1}$ whose order is at least $n-n^{10\epsilon}$, and all the other components are isolated vertices, whose distance in $Q^d$ is at least two.
\end{proof}

We conclude the note with the proof of \Cref{th: main}.
\begin{proof}[Proof of Theorem \ref{th: main}]
Note that $\tau_D\le \tau_C$ deterministically. Thus, it suffices to show that there exists $t$ such that \textbf{whp} $Q(t)$ has isolated vertices, and satisfies the properties of \Cref{prop: main}, that is, \textbf{whp} $Q(t)$ has a unique connected component of order $n-o(n)$ and all the other vertices of $Q(t)$ are isolated vertices whose distance between them (in $Q^d$) is at least two. Indeed, then adding any new edge in the process will either not affect the component structure, or will connect an isolated vertex to the unique connected component of order $n-o(n)$, and we will have that \textbf{whp} $\tau_D=\tau_C$.

Let $\mathcal{P}_1$ be the property of having no isolated vertices. Since $Q^d$ is bipartite, the number of isolated vertices in $Q^d_p$ stochastically dominates $Bin(n/2, (1-p)^d)$. Hence, setting $p=\frac{1}{2}-\epsilon$, the probability there are no isolated vertices is at most $\left(1-(1-p)^d\right)^{n/2}\le \exp\left\{-(\frac{1}{2}+\epsilon)^d\frac{n}{2}\right\}=o(1)$. Moreover, note that the properties in \Cref{prop: main} are increasing --- if $H\subseteq Q^d$ satisfies them, and $H\subset H'$, then $H'$ also satisfies them. Thus, using standard coupling between $Q^d_p$ and $Q(t)$, we obtain that there exists a $t<\tau_D$ such that \textbf{whp} $Q(t)$ has a unique connected component of order $n-o(n)$ and all its other vertices are isolated vertices whose distance in $Q^d$ is at least two. 
\end{proof}

\bibliographystyle{abbrv}
\bibliography{perc} 
\end{document}